\documentclass[11pt]{amsart}

\usepackage[a4paper,margin=1in]{geometry}
\usepackage{amsmath,amssymb,amsthm,mathtools}
\usepackage{hyperref}

\newtheorem{theorem}{Theorem}
\newtheorem{lemma}{Lemma}
\newtheorem{remark}{Remark}

\newcommand{\D}{\mathbb D}
\newcommand{\C}{\mathbb C}
\newcommand{\Log}{\operatorname{Log}}
\newcommand{\Arg}{\operatorname{Arg}}

\title[A bounded quasiconformal harmonic map]
{A bounded globally univalent quasiconformal harmonic map whose analytic part is unbounded}

\author{David Kalaj}

\address{University of Montenegro, Faculty of Natural Sciences and Mathematics,
Cetinjski put b.b., 81000 Podgorica, Montenegro}

\email{davidk@ucg.ac.me}

\subjclass[2020]{Primary 31A05, 30C62; Secondary 30C55, 30C65}

\keywords{Quasiconformal harmonic mappings, univalent harmonic mappings,
bounded harmonic mappings, complex dilatation, analytic part}

\begin{document}
\begin{abstract}
We construct, for every \(0<k<1\), a bounded globally univalent harmonic
mapping
\[
        f=h+\overline g \colon \D\to\C
\]
such that
\[
        |g'(z)|\le k|h'(z)|,\qquad z\in\D,
\]
while the analytic part \(h\) is unbounded. The construction is based on a
bounded logarithmic spiral map on a far right horizontal strip, together
with a smaller logarithmic perturbation.
\end{abstract}
\maketitle
\section{Introduction}
The class of quasiconformal harmonic mappings has been studied extensively
from the point of view of boundary regularity, Lipschitz and co-Lipschitz
estimates, boundary correspondence, and distortion. A fundamental theorem
of Pavlovi\'c asserts that a harmonic homeomorphism of the unit disk onto
itself is quasiconformal if and only if it is bi-Lipschitz; this may also be
formulated in terms of the bi-Lipschitz character of the boundary function
and the boundedness of the Hilbert transform of its derivative
\cite{Pavlovic2002}. Kalaj and Pavlovi\'c obtained an analogous boundary
correspondence theorem for quasiconformal harmonic diffeomorphisms of a
half-plane \cite{KalajPavlovic2005}. Kalaj subsequently proved several regularity results for quasiconformal
harmonic mappings between smooth Jordan domains. In particular, he proved
Lipschitz estimates for mappings between \(C^{1,\mu}\)-smooth Jordan
domains, and bi-Lipschitz estimates under the additional assumption that
the image domain is convex \cite{KalajMathZ2008}. Kalaj proved several regularity results for quasiconformal harmonic
mappings between smooth Jordan domains. In particular, he proved Lipschitz
estimates for mappings between \(C^{1,\mu}\)-smooth Jordan domains, and
bi-Lipschitz estimates under the additional assumption that the image
domain is convex \cite{KalajMathZ2008}. Related bi-Lipschitz results for
smoother domains were obtained in \cite{KalajPisa2011,KalajMateljevic2012}.
The \(C^{1,\alpha}\) Lyapunov-domain case was subsequently treated by
Bo\v zin and Mateljevi\'c, who proved co-Lipschitz estimates for harmonic
quasiconformal mappings onto Lyapunov Jordan domains \cite{BozinMateljevicPisa2020}.
Later, Kalaj showed that if the domains have only \(C^1\) boundaries, then
a quasiconformal harmonic mapping is globally H\"older continuous for every
exponent \(0<\alpha<1\), although it need not be Lipschitz in general
\cite{Kalaj2022RMI}.

Other results of Kalaj connect this theory with harmonic measure, weighted
estimates, and elliptic equations. In \cite{Kalaj2015AdvMath}, he extended
Lavrentiev's \(A_\infty\)-equivalence between harmonic measure and
arc-length measure in chord-arc domains, and used this to obtain a
Lindel\"of-type theorem for quasiconformal harmonic mappings. Kalaj and
Pavlovi\'c also studied quasiconformal self-mappings of the disk satisfying
the Poisson equation
\[
        \Delta w=g,
\]
showing that the corresponding family is uniformly Lipschitz, and, under a
smallness assumption on \(g\), uniformly bi-Lipschitz
\cite{KalajPavlovic2011TAMS}. In a different direction, Kalaj obtained
Riesz-type inequalities for harmonic mappings in the unit disk
\cite{Kalaj2019TAMS}. Further related work includes results of Kalaj and
Mateljevi\'c on \((K,K')\)-quasiconformal harmonic mappings
\cite{KalajMateljevic2012}, Partyka and Sakan on quasiconformal and
Lipschitz harmonic mappings onto bounded convex domains
\cite{PartykaSakan2014}, Partyka--Sakan--Zhu on mappings with convex
holomorphic part \cite{PartykaSakanZhu2018}, and Chen and collaborators on
John disks, boundary behavior, and biharmonic variants
\cite{ChenPonnusamy2017,ChenWang2020}.

The present paper concerns a different aspect of the theory. We construct a
bounded, globally univalent quasiconformal harmonic mapping
\[
        f=h+\overline g
\]
in the unit disk whose analytic part \(h\) is nevertheless unbounded. Thus,
although quasiconformal harmonic mappings enjoy strong regularity under
many geometric and analytic assumptions, boundedness of the full harmonic
mapping does not force boundedness of the analytic part in its canonical
decomposition.
\section{Statement of the result}

\begin{theorem}\label{th:1}
Let \(0<k<1\). There exists a bounded globally one-to-one harmonic mapping
\[
        f=h+\overline g \colon \D\to\C
\]
such that
\[
        |g'(z)|\le k|h'(z)|,\qquad z\in\D,
\]
but \(h\) is unbounded in \(\D\).
\end{theorem}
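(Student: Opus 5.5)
The plan is to work on a simply connected model domain instead of \(\D\). Harmonicity, global injectivity, boundedness, the decomposition \(f=h+\overline g\), and the pointwise inequality \(|g'|\le k|h'|\) are all preserved under precomposition with a conformal map \(\varphi\colon\D\to\Omega\): the dilatation \(\overline{f_{\bar z}}/f_z=g'/h'\) is invariant, and \(h\circ\varphi\) is unbounded iff \(h\) is. So it suffices to build \(F=H+\overline G\) on a domain \(\Omega\) and then set \(f=F\circ\varphi\). Following the abstract, I take \(\Omega\) to be a far-right half-strip
\[
\Omega=\{w=x+iy:\ x>R,\ a<y<b\},
\]
with \(R\) large and the interval \((a,b)\) to be tuned in terms of \(k\).

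The first step is a bounded logarithmic spiral model \(F_0=H_0+\overline{G_0}\) on \(\Omega\). I try exponentials \(H_0(w)=e^{\alpha w}\) and \(G_0(w)=c\,e^{\beta w}\) with complex \(\alpha,\beta,c\). The parameters are to be tuned so that three things happen. First, \(F_0(x+iy)\) behaves like a damped rotation \(e^{-\varepsilon x}e^{i\lambda x}\Phi(y)\), so that horizontal lines go to logarithmic spirals winding into \(0\) and \(F_0\) is bounded. Second, \(|G_0'/H_0'|\le k_0<k\) on \(\Omega\); for two exponentials this ratio is \(|c\beta/\alpha|\,e^{\mathrm{Re}(\beta-\alpha)x-\mathrm{Im}(\beta-\alpha)y}\), so it is controlled by the strip height and the sign conditions. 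Third, \(F_0\) is injective on \(\Omega\). I plan to prove injectivity by showing that \(F_0\) is locally a sense-preserving diffeomorphism (Jacobian \(|H_0'|^2-|G_0'|^2>0\)) that maps each vertical segment \(\{x\}\times(a,b)\) onto an arc in a distinct annular sector of the spiral. Equivalently, in the coordinates \(\Log F_0\) the map is a small perturbation of an affine injective map of the strip. In this model \(H_0\) is still bounded.

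The second step is the "smaller logarithmic perturbation", which makes the analytic part unbounded. I add a term \(H_1=\delta\,\Log w\), or a multiple of \(\Log w\) times the spiral factor, to the analytic part, together with a compensating anti-analytic term \(\overline{G_1}\) in which \(G_1\) is, for instance, \(-\delta\,\Log w\). The point is that \(\Log w-\overline{\Log w}=2i\Arg w\) is bounded on \(\Omega\), while \(\Log w\) itself is not. On the far-right strip the derivatives of these terms are \(O(\delta/x)\), which is small compared with \(|H_0'|\), \(|G_0'|\) only if the exponential part decays no faster than \(1/x\). This forces the spiral decay to be logarithmic rather than exponential in \(x\). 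Concretely, I would replace \(e^{\alpha w}\) by a power \(w^{\alpha}=e^{\alpha\Log w}\), so that the spiral lives in the variable \(\Log w\). The dilatation must then stay below \(k\): with \(H=H_0+H_1\) and \(G=G_0+G_1\), I need \(|G_0'+G_1'|\le k|H_0'+H_1'|\). Since \(|G_1'|=|H_1'|\) for the pure-\(\Log\) pair, this works if \(|H_1'|\) is at most a small fraction \((k-k_0)/(1+k)\) of \(|H_0'|\) on \(\Omega\). Enlarging \(R\) and shrinking \(\delta\) should then give this uniformly.

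The main obstacle I expect is the simultaneous requirement that the perturbation is large enough to make \(H\) unbounded but small enough in \(C^1\) relative to \(F_0\) to keep both injectivity and the dilatation bound. For injectivity of \(F=F_0+(H_1+\overline{G_1})\), I would try first the standard stability argument on each compact piece of \(\Omega\) combined with a global argument. Because \(F_0\) is a sense-preserving homeomorphism of the half-strip onto a spiral domain, it suffices that \(|DF-DF_0|\) is small relative to the co-Lipschitz constant of \(F_0\), uniformly near infinity. It also suffices that \(F\) and \(F_0\) have the same boundary behavior, so that a degree or argument-principle count gives injectivity. If the pure \(\Log w\) perturbation is too crude for this, I would fall back to choosing the perturbation in spiral coordinates, e.g. \(H_1=\delta\,w^{\alpha}\Log w\), so that \(H_1\) grows relative to the bounded model but its image stays tangent to the spiral leaves, where the winding structure makes injectivity transparent.
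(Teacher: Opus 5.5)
There is a genuine gap, and it sits in the spiral model you commit to.

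\textbf{The dilatation bound fails for a damped spiral.} Your architecture is the paper's: a bounded analytic spiral model, plus an analytic term \(L\) with bounded imaginary part and unbounded real part, placed in \(h\) and cancelled in \(h+\overline g\). In your version \(h=H_0+\delta\Log w\), \(g=-\delta\Log w\), \(f=H_0+2i\delta\Arg w\). In the paper's version \(h=\Psi\circ S+i\varepsilon\Log\Log S\), \(g=i\varepsilon\Log\Log S\), \(f=\Psi\circ S-2\varepsilon\Arg\Log S\).

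You rightly note that \(|H_0'|\) must be \(\gtrsim 1/|w|\). But you then take a damped power \(H_0=w^{\alpha}\) with \(\operatorname{Re}\alpha<0\), a logarithmic spiral into \(0\). This conflates decay of \(H_0\) with decay of \(H_0'\).

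For that choice \(|H_0'|\asymp x^{\operatorname{Re}\alpha-1}=o(1/x)\), so \(|H_1'|/|H_0'|\asymp\delta x^{-\operatorname{Re}\alpha}\to\infty\). With \(G_0=0\), for instance, \(|g'/h'|=\delta/|\alpha w^{\alpha}+\delta|\to1\) as \(x\to\infty\). Enlarging \(R\) only makes this worse.

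The obstruction is structural. The condition \(|g'|\le k|h'|\), together with your requirement \(|G_0'|\le k_0|H_0'|\), forces \(|L'|\lesssim|H_0'|\). A logarithmic spiral into \(0\) has finite length, since \(\int^\infty x^{\operatorname{Re}\alpha-1}\,dx<\infty\), so \(L\) would be bounded.

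Your fallback \(H_1=\delta w^{\alpha}\Log w\) also fails: for \(\operatorname{Re}\alpha<0\) it tends to \(0\), so it cannot make \(h\) unbounded.

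The paper's remedy is \(\Psi(s)=s^{-\alpha}e^{-is^{\alpha}}\) with \(0<\alpha<\tfrac12\). Its modulus decays like \(x^{-\alpha}\), but the angular speed \(\alpha x^{\alpha-1}\) compensates, so \(|s\Psi'(s)|\asymp\alpha\). Asymptotically this is the infinite-length spiral \(u^{-1}e^{-iu}\), and it yields \(|\omega|\le\varepsilon/(c\log A_0-\varepsilon)\).

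\textbf{Injectivity is not established.} You leave it as a menu of strategies. A bound on \(|DF-DF_0|\) relative to the local co-Lipschitz constant of \(F_0\) is a local statement. Distinct windings of the image spiral are close in the plane but far apart intrinsically, so such a bound does not give global injectivity. A degree count would need boundary control at the spiral's accumulation point, which you do not supply.

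What is needed is a two-point bound \(|P(s)-P(t)|\le\eta\,|F_0(s)-F_0(t)|\) for all \(s\neq t\), where \(P\) is the bounded perturbation and \(\eta\) times the size of the perturbation is \(<1\). This is exactly the paper's spiral lemma, proved by a case analysis on \(U(s)-U(t)\).

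\textbf{A repair within your framework.} Take \(\operatorname{Re}\alpha=0\). On \(\Omega=\{x>R,\ a<y<b\}\) with \(0<a<b<e^{2\pi/\nu}a\), the map \(P(w)=w^{i\nu}\) is injective and \(|P|=e^{-\nu\Arg w}\). Hence \(f=P+2i\delta\Arg w=\Phi(P)\) with \(\Phi(p)=p-\tfrac{2i\delta}{\nu}\log|p|\), which is injective on \(\{e^{-\nu\pi/2}<|p|<1\}\) once \(2\delta e^{\nu\pi/2}<\nu\). The dilatation satisfies \(|g'/h'|=\delta/|i\nu P+\delta|\le\delta/(\nu e^{-\nu\pi/2}-\delta)\). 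As written, however, your proposal commits to the damped model, where the dilatation bound fails at infinity.
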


Recall that for a sense-preserving harmonic mapping
\[
        f=h+\overline g,
\]
the analytic dilatation is
\[
        \omega(z)=\frac{g'(z)}{h'(z)}.
\]
Thus the condition
\[
        |\omega(z)|\le k<1
\]
is equivalent to
\[
        |g'(z)|\le k|h'(z)|.
\]
It implies that \(f\) is quasiconformal with maximal dilatation
\[
        K=\frac{1+k}{1-k}.
\]

\section{A stable logarithmic spiral lemma}
\begin{lemma}[Stable logarithmic spiral lemma]
Let
\[
0<\alpha<\frac12,\qquad b>0,
\]
and, for \(A>0\), let
\[
R_A=\{s=x+iy:x>A,\ |y|<b\}.
\]
Define
\[
\Psi(s)=s^{-\alpha}e^{-is^\alpha},
\]
using the principal branch, and set
\[
Q(s)=\Arg\Log s
\]
where defined.

Then, for \(A\) sufficiently large, \(Q\) is a smooth real-valued function on \(R_A\), the map \(\Psi\) is one-to-one on \(R_A\), and
\[
\eta(A):=
\sup_{\substack{s,t\in R_A\\s\neq t}}
\frac{|Q(s)-Q(t)|}{|\Psi(s)-\Psi(t)|}
\longrightarrow 0
\qquad\text{as } A\to+\infty.
\]
Consequently, for every fixed \(\varepsilon>0\), if \(A\) is sufficiently large, then
\[
T_\varepsilon(s)=\Psi(s)-2\varepsilon Q(s)
\]
is one-to-one on \(R_A\).
\end{lemma}

\begin{proof}
We first enlarge \(A\), if necessary, so that \(A>2\). Then \(R_A\) lies in the right half-plane, the principal logarithm is defined there, and
\[
\Re \Log s=\log |s|>0
\]
for \(s\in R_A\). Hence \(\Log s\) lies in the right half-plane, so \(Q(s)=\Arg\Log s\) is a smooth real-valued function on \(R_A\).

All asymptotic estimates below are uniform for \(|y|<b\) as \(x\to+\infty\).
Write
\[
        \Psi(s)=e^{-V(s)-iU(s)},
\]
where
\[
        U(s)=\operatorname{Re}(s^\alpha)+\alpha\Arg s,
        \qquad
        V(s)=\alpha\log|s|-\operatorname{Im}(s^\alpha).
\]
Let \(s=x+iy\), with \(|y|<b\). All estimates below are uniform for
\(|y|<b\) as \(x\to+\infty\). We have
\[
        U(s)=x^\alpha+O(x^{-1}),
        \qquad
        V(s)=\alpha\log x-\alpha yx^{\alpha-1}+O(x^{-2}).
\]
Put
\[
        u=U(s),
        \qquad
        \beta=\frac{1-\alpha}{\alpha}.
\]
Then
\[
        x\asymp u^{1/\alpha},
        \qquad
        V(s)=\log u+O(u^{-\beta}).
\]
Since \(0<\alpha<1/2\), we have \(\beta>1\). Hence
\[
        e^{-V(s)}
        =
        u^{-1}\left(1+O(u^{-\beta})\right),
\]
and therefore
\[
        \Psi(s)
        =
        u^{-1}e^{-iu}+O(u^{-1-\beta})
        =
        u^{-1}e^{-iu}+O(u^{-1/\alpha}).
\]
If
\[
        \gamma(u)=u^{-1}e^{-iu},
\]
then
\[
        \Psi(s)=\gamma(U(s))+O(U(s)^{-1/\alpha}).
\]

We shall use the following elementary observation. After increasing \(A\)
if necessary,
\[
        U_x>0
        \qquad
        \text{on }
        \{x+iy:x>A/2,\ |y|<b\}.
\]
Indeed,
\[
        U_x
        =
        \alpha\operatorname{Re}(s^{\alpha-1})
        -
        \alpha\frac{y}{x^2+y^2}
        =
        \alpha x^{\alpha-1}(1+o(1)).
\]
Thus \((u,y)=(U(x+iy),y)\) gives global coordinates on this strip.

Moreover, if \(s,t\in R_A\), then the coordinate rectangle in the
\((u,y)\)-variables joining \(s\) to \(t\) is contained in
\[
        \{x+iy:x>A/2,\ |y|<b\}.
\]
Indeed, from
\[
        U(x+iy)=x^\alpha+O(x^{-1})
\]
there is \(C>0\) such that
\[
        x^\alpha-Cx^{-1}
        \le U(x+iy)
        \le x^\alpha+Cx^{-1}
\]
for \(x\) large. If \(u_*\) lies between \(U(s)\) and \(U(t)\), and
\(y_*\) lies between \(\operatorname{Im}s\) and \(\operatorname{Im}t\), then
\[
        u_*\ge A^\alpha-CA^{-1},
\]
whereas
\[
        U(A/2+iy_*)\le (A/2)^\alpha+CA^{-1}.
\]
For \(A\) sufficiently large,
\[
        A^\alpha-CA^{-1}>(A/2)^\alpha+CA^{-1}.
\]
Thus
\[
        u_*>U(A/2+iy_*).
\]
Since \(U_x>0\) for \(x>A/2\), the point \(x_*\) determined by
\[
        U(x_*+iy_*)=u_*
\]
satisfies \(x_*>A/2\). This proves the claim.

We first prove that \(\Psi\) is one-to-one on \(R_A\). Suppose
\[
        \Psi(s)=\Psi(t).
\]
Write
\[
        u=U(s),\quad v=V(s),
        \qquad
        u'=U(t),\quad v'=V(t).
\]
Since
\[
        \Psi(s)=e^{-v-iu},
        \qquad
        \Psi(t)=e^{-v'-iu'},
\]
we get
\[
        v=v',
        \qquad
        u-u'\in 2\pi\mathbb Z.
\]

First assume \(u=u'\). Then \(s=t\). Indeed, along the level curve
\(U(x+iy)=u\), write \(x=X(u,y)\). Since
\[
        U_x\frac{dx}{dy}+U_y=0,
\]
we have
\[
        \frac{dx}{dy}=-\frac{U_y}{U_x}.
\]
Therefore
\[
\begin{aligned}
        \frac{d}{dy}V(X(u,y)+iy)
        &=
        V_x\frac{dx}{dy}+V_y        \\
        &=
        V_y-\frac{V_xU_y}{U_x}.
\end{aligned}
\]
Using
\[
        U_x=\alpha x^{\alpha-1}+O(x^{\alpha-2}),
        \qquad
        U_y=\frac{\alpha}{x}+O(x^{\alpha-2}),
\]
and
\[
        V_x=\frac{\alpha}{x}+O(x^{\alpha-2}),
        \qquad
        V_y=-\alpha x^{\alpha-1}+o(x^{\alpha-1}),
\]
we get
\[
        \frac{d}{dy}V(X(u,y)+iy)
        =
        -\alpha x^{\alpha-1}+o(x^{\alpha-1})<0
\]
for \(A\) sufficiently large. Hence \(V\) is strictly monotone on each
level curve of \(U\). Thus
\[
        U(s)=U(t),
        \qquad
        V(s)=V(t)
\]
imply \(s=t\).

Now suppose, after interchanging \(s\) and \(t\) if necessary, that
\[
        u'=u+2\pi N,
        \qquad
        N\in\mathbb N.
\]
Using
\[
        V(s)=\log u+O(u^{-\beta}),
        \qquad
        V(t)=\log u'+O((u')^{-\beta}),
\]
we obtain
\[
        |V(t)-V(s)|
        \ge
        |\log u'-\log u|-O(u^{-\beta})-O((u')^{-\beta}).
\]
If \(u\le u'\le 2u\), then
\[
        |\log u'-\log u|
        =
        \log\left(1+\frac{u'-u}{u}\right)
        \ge
        \frac12\frac{u'-u}{u}
        \ge
        \frac{\pi}{u}.
\]
If \(u'>2u\), then
\[
        |\log u'-\log u|\ge \log 2.
\]
In either case, since \(\beta>1\), the error terms cannot cancel the
logarithmic difference for \(A\) large. Hence \(V(t)\ne V(s)\), a
contradiction. Therefore \(N=0\), and so \(s=t\). Thus \(\Psi\) is
one-to-one on \(R_A\).

It remains to prove \(\eta(A)\to0\). First,
\[
        \Log s=\log|s|+i\Arg s.
\]
Since \(|\Arg s|\le C/x\) on \(R_A\),
\[
        Q(s)
        =
        \Arg\Log s
        =
        \frac{\Arg s}{\log|s|}
        +
        O\left(
        \frac{|\Arg s|^3}{(\log|s|)^3}
        \right).
\]
Consequently,
\[
        |Q(s)|\le \frac{C}{x\log x}
        \le
        \frac{C}{u^{1/\alpha}\log u}.
\]
Also straightforward calculations yields,
\[
        Q_x=O\left(\frac{1}{x^2\log x}\right),
        \qquad
        Q_y=O\left(\frac{1}{x\log x}\right).
\]

In the global coordinates \((u,y)=(U(x+iy),y)\), we have
\[
        x_u=\frac1{U_x}=O(x^{1-\alpha}),
        \qquad
        x_y=-\frac{U_y}{U_x}=O(x^{-\alpha}).
\]
Therefore
\[
        \left|\frac{\partial Q}{\partial u}\right|
        \le
        \frac{C}{u^2\log u},
        \qquad
        \left|\frac{\partial Q}{\partial y}\right|
        \le
        \frac{C}{u^{1/\alpha}\log u}.
\]
Similarly,
\[
        \left|\frac{\partial V}{\partial u}\right|
        \le
        \frac{C}{u},
        \qquad
        \frac{\partial V}{\partial y}
        =
        -\alpha x^{\alpha-1}+o(x^{\alpha-1}).
\]
Since \(x\asymp u^{1/\alpha}\), this gives
\[
        c u^{-\beta}
        \le
        \left|\frac{\partial V}{\partial y}\right|
        \le
        C u^{-\beta}.
\]

Now take \(s,t\in R_A\), and write
\[
        u=U(s),\quad v=V(s),
        \qquad
        u'=U(t),\quad v'=V(t).
\]
Assume without loss of generality that \(u\le u'\). We split into three
cases.

\medskip

\noindent
\textbf{Case 1: \(|u-u'|\le\pi\).}
Since \(s,t\in R_A\), we have \(u,u'\gtrsim A^\alpha\).  After increasing
\(A\), we may assume \(u,u'>2\pi\).  Hence, if \(|u-u'|\le\pi\), then
\[
        \frac12 u\le u'\le 2u,
\]
and therefore \(u\asymp u'\).
Also
\[
        |v-v'|=O(u^{-1}),
\]
because
\[
        v=\log u+O(u^{-\beta}),
        \qquad
        v'=\log u'+O((u')^{-\beta}).
\]
Since
\[
        \Psi(t)-\Psi(s)
        =
        e^{-v-iu}
        \left(e^{-(v'-v)-i(u'-u)}-1\right),
\]
and since \(|u'-u|\le\pi\), \(|v'-v|\le1\) for \(A\) large, we have
\[
        |\Psi(s)-\Psi(t)|
        \ge
        c u^{-1}\left(|u-u'|+|v-v'|\right).
\]

Let \(y=\operatorname{Im}s\) and \(y'=\operatorname{Im}t\). Integrating in
the \((u,y)\)-coordinate rectangle gives
\[
        |Q(s)-Q(t)|
        \le
        \frac{C}{u^2\log u}|u-u'|
        +
        \frac{C}{u^{1/\alpha}\log u}|y-y'|.
\]
Moreover,
\[
        |V(u',y)-V(u,y)|
        \le
        \frac{C}{u}|u-u'|,
\]
while
\[
        |V(u',y')-V(u',y)|
        \ge
        c u^{-\beta}|y-y'|.
\]
Hence
\[
        u^{-\beta}|y-y'|
        \le
        C\left(|v-v'|+u^{-1}|u-u'|\right).
\]
Since \(1/\alpha=1+\beta\), we obtain
\[
        |Q(s)-Q(t)|
        \le
        \frac{C}{u\log u}|v-v'|
        +
        \frac{C}{u^2\log u}|u-u'|.
\]
Therefore
\[
        |Q(s)-Q(t)|
        \le
        \frac{C}{\log u}\,
        u^{-1}\left(|u-u'|+|v-v'|\right)
        \le
        \frac{C}{\log u}|\Psi(s)-\Psi(t)|.
\]
This ratio tends to \(0\) uniformly as \(A\to+\infty\).

\medskip

\noindent
\textbf{Case 2: \(u'\ge 2u\).}

Since
\[
        |\Psi(s)|=u^{-1}(1+o(1)),
        \qquad
        |\Psi(t)|=(u')^{-1}(1+o(1)),
\]
we get
\[
        |\Psi(s)-\Psi(t)|
        \ge
        \bigl||\Psi(s)|-|\Psi(t)|\bigr|
        \ge
        c u^{-1}.
\]
On the other hand,
\[
        |Q(s)-Q(t)|
        \le
        \frac{C}{u^{1/\alpha}\log u}.
\]
Thus
\[
        \frac{|Q(s)-Q(t)|}{|\Psi(s)-\Psi(t)|}
        \le
        C\frac{u^{1-1/\alpha}}{\log u}
        \to0.
\]

\medskip

\noindent
\textbf{Case 3: \(|u-u'|>\pi\) and \(u'<2u\).}

Since we have assumed \(u\le u'\), the additional condition \(u'<2u\)
gives
\[
        u\le u'<2u.
\]
Hence \(u\asymp u'\). For
\[
        \gamma(u)=u^{-1}e^{-iu},
\]
we have
\[
        |\gamma(u)-\gamma(u')|
        \ge
        \left|\frac1u-\frac1{u'}\right|
        =
        \frac{|u'-u|}{uu'}
        \ge
        c u^{-2}.
\]
Using
\[
        \Psi(s)=\gamma(u)+O(u^{-1/\alpha}),
        \qquad
        \Psi(t)=\gamma(u')+O(u^{-1/\alpha}),
\]
and since \(1/\alpha>2\), we get
\[
        |\Psi(s)-\Psi(t)|
        \ge
        c u^{-2}
\]
for \(A\) sufficiently large. Meanwhile,
\[
        |Q(s)-Q(t)|
        \le
        \frac{C}{u^{1/\alpha}\log u}.
\]
Therefore
\[
        \frac{|Q(s)-Q(t)|}{|\Psi(s)-\Psi(t)|}
        \le
        C\frac{u^{2-1/\alpha}}{\log u}
        \to0,
\]
because \(2-1/\alpha<0\).

Combining the three cases gives
\[
        \eta(A)\to0.
\]

Finally, suppose
\[
        T_\varepsilon(s)=T_\varepsilon(t).
\]
Then
\[
        \Psi(s)-\Psi(t)
        =
        2\varepsilon\bigl(Q(s)-Q(t)\bigr).
\]
Hence
\[
        |\Psi(s)-\Psi(t)|
        \le
        2\varepsilon\eta(A)|\Psi(s)-\Psi(t)|.
\]
Choose \(A\) so large that \(2\varepsilon\eta(A)<1\). Then
\[
        \Psi(s)=\Psi(t).
\]
Since \(\Psi\) is one-to-one on \(R_A\), we get \(s=t\). Thus
\(T_\varepsilon\) is one-to-one on \(R_A\).
\end{proof}
\section{Proof of the main result}
\begin{proof}[Proof of Theorem~\ref{th:1}]
Fix \(0<k<1\). Choose
\[
        0<\alpha<\frac12
\]
and fix \(\varepsilon>0\). We shall choose a sufficiently large number
\(A>0\) below.

Define
\[
        S(z)=A-\Log(1-z),\qquad z\in\D,
\]
where the principal logarithm is used. Since \(1-z\) lies in the right
half-plane for \(z\in\D\), the function \(\Log(1-z)\) is single-valued and
analytic in \(\D\). Moreover,
\[
        |\Arg(1-z)|<\frac{\pi}{2},
\]
and \(|1-z|<2\). Hence
\[
        S(\D)\subset
        \{s=x+iy:x>A-\log 2,\ |y|<\pi/2\}.
\]
Put
\[
        A_0=A-\log 2.
\]
By increasing \(A\), we may assume that \(A_0\) is as large as needed in all
subsequent arguments.

Let
\[
        \Psi(s)=s^{-\alpha}e^{-is^\alpha},
\]
where the principal branch is used. We apply the stable logarithmic spiral
lemma with
\[
        b=\frac{\pi}{2}
\]
and with the lower real-part bound \(A_0\). Thus, after increasing \(A\) if
necessary, the map
\[
        T_\varepsilon(s)=\Psi(s)-2\varepsilon \Arg\Log s
\]
is one-to-one on the horizontal strip containing \(S(\D)\).

We now define analytic functions \(h\) and \(g\) in \(\D\) by
\[
        h(z)=\Psi(S(z))+i\varepsilon\Log\Log S(z),
\]
and
\[
        g(z)=i\varepsilon\Log\Log S(z).
\]
For \(A\) sufficiently large, \(S(\D)\) lies in a far right half-plane, and
\(\Log S(z)\) also lies in the right half-plane. Hence \(\Log\Log S(z)\) is
well-defined and analytic in \(\D\).

Set
\[
        f(z)=h(z)+\overline{g(z)}.
\]
Then \(f\) is harmonic in \(\D\). Moreover,
\[
\begin{aligned}
        f(z)
        &=
        \Psi(S(z))
        +i\varepsilon\Log\Log S(z)
        -i\varepsilon\overline{\Log\Log S(z)}  \\
        &=
        \Psi(S(z))-2\varepsilon \Arg\Log S(z).
\end{aligned}
\]
Therefore
\[
        f=T_\varepsilon\circ S.
\]
Since \(S\) is conformal in \(\D\) and \(T_\varepsilon\) is one-to-one on
the strip containing \(S(\D)\), it follows that \(f\) is globally
one-to-one in \(\D\).

Next we prove that \(f\) is bounded. Let \(s=x+iy\), with
\[
        x>A_0,\qquad |y|<\frac{\pi}{2}.
\]
On this strip,
\[
        \Im(s^\alpha)=O(x^{\alpha-1}),
        \qquad x\to+\infty,
\]
uniformly in \(y\). Hence
\[
        |\Psi(s)|
        =
        |s|^{-\alpha}e^{\Im(s^\alpha)}
\]
is bounded on \(S(\D)\). Also, since \(S(\D)\) lies in a far right
horizontal strip, the quantity
\[
        \Arg\Log S(z)
\]
is bounded. Therefore
\[
        f(z)=\Psi(S(z))-2\varepsilon\Arg\Log S(z)
\]
is bounded in \(\D\).

We now show that the analytic part \(h\) is unbounded. Take \(z=r\in(0,1)\)
and let \(r\to1^{-}\). Then
\[
        S(r)=A-\log(1-r)\to+\infty.
\]
Consequently,
\[
        \Log\Log S(r)\to+\infty,
\]
whereas
\[
        \Psi(S(r))
        =
        S(r)^{-\alpha}e^{-iS(r)^\alpha}
        \to0.
\]
Thus
\[
        h(r)
        =
        \Psi(S(r))+i\varepsilon\Log\Log S(r)
        \to i\infty.
\]
Hence \(h\) is unbounded in \(\D\).

It remains to prove the quasiconformal estimate
\[
        |g'(z)|\le k |h'(z)|,\qquad z\in\D.
\]
We have
\[
        S'(z)=\frac1{1-z}.
\]
Also,
\[
        \Psi(s)=s^{-\alpha}e^{-is^\alpha},
\]
and therefore
\[
\begin{aligned}
        \Psi'(s)
        &=
        -\alpha s^{-\alpha-1}e^{-is^\alpha}
        -i\alpha s^{-1}e^{-is^\alpha}  \\
        &=
        -\frac{i\alpha e^{-is^\alpha}}{s}
        \left(1-i s^{-\alpha}\right).
\end{aligned}
\]
Hence
\[
        g'(z)
        =
        \frac{i\varepsilon}{S(z)\Log S(z)}S'(z),
\]
and
\[
        h'(z)
        =
        \left(
        \Psi'(S(z))
        +
        \frac{i\varepsilon}{S(z)\Log S(z)}
        \right)S'(z).
\]
Thus
\[
        \omega(z):=\frac{g'(z)}{h'(z)}
        =
        \frac{i\varepsilon}
        {S(z)\Log S(z)\Psi'(S(z))+i\varepsilon},
\]
whenever the denominator is nonzero.

Using the formula for \(\Psi'\), we obtain
\[
        S(z)\Log S(z)\Psi'(S(z))
        =
        -i\alpha\Log S(z)e^{-iS(z)^\alpha}
        \left(1-iS(z)^{-\alpha}\right).
\]
On the far right horizontal strip containing \(S(\D)\), the factor
\[
        |e^{-iS(z)^\alpha}|
\]
is bounded above and below by positive constants, uniformly in \(z\). Also,
\[
        \left|1-iS(z)^{-\alpha}\right|\to1
\]
uniformly as \(A\to+\infty\). Hence there exists a constant \(c>0\), independent
of \(A\), such that for all sufficiently large \(A\),
\[
        \left|
        S(z)\Log S(z)\Psi'(S(z))
        \right|
        \ge
        c\log A_0,
        \qquad z\in\D.
\]
Therefore,
\[
\begin{aligned}
        \left|
        S(z)\Log S(z)\Psi'(S(z))+i\varepsilon
        \right|
        &\ge
        \left|
        S(z)\Log S(z)\Psi'(S(z))
        \right|-\varepsilon  \\
        &\ge
        c\log A_0-\varepsilon .
\end{aligned}
\]
Choose \(A\) so large that
\[
        c\log A_0\ge \varepsilon\left(1+\frac1k\right).
\]
Then
\[
        c\log A_0-\varepsilon\ge \frac{\varepsilon}{k},
\]
and hence
\[
        |\omega(z)|
        \le
        \frac{\varepsilon}{c\log A_0-\varepsilon}
        \le k,
        \qquad z\in\D.
\]
In particular, \(h'(z)\neq0\) in \(\D\), and
\[
        |g'(z)|\le k |h'(z)|,
        \qquad z\in\D.
\]
Consequently,
\[
        J_f=|h'|^2-|g'|^2\ge (1-k^2)|h'|^2>0.
\]
Thus \(f\) is sense-preserving and locally diffeomorphic. Since \(f\) is
globally one-to-one, invariance of domain implies that \(f\) is a
homeomorphism of \(\D\) onto the bounded domain \(f(\D)\). Its maximal
dilatation is at most
\[
        K=\frac{1+k}{1-k}.
\]
Together with the boundedness of \(f\) and the unboundedness of \(h\)
proved above, this completes the proof.
\end{proof}

\begin{remark}
This phenomenon is complementary to the regularity theory for
quasiconformal harmonic mappings. Results of Pavlovi\'c, Kalaj,
Mateljevi\'c, Partyka, Sakan, Zhu, Chen and others give strong Lipschitz,
co-Lipschitz, H\"older, boundary-correspondence, and weighted-measure
conclusions under suitable assumptions on the mapping, the image domain, the
boundary correspondence, or the governing elliptic equation. The present
construction shows that such regularity properties do not imply boundedness
of the individual analytic summands in the canonical representation
\[
        f=h+\overline g .
\]
\end{remark}

\section*{Acknowledgements}

The author would like to thank Anton Baranov for posing this problem and
for helpful discussions. The author gratefully acknowledges financial
support from the grants ``Mathematical Analysis, Optimisation and Machine
Learning'' and ``Complex--analytic and geometric techniques for
non-Euclidean machine learning: theory and applications''.


\begin{thebibliography}{99}

\bibitem{AstalaIwaniecMartin}
K. Astala, T. Iwaniec and G. Martin,
\emph{Elliptic Partial Differential Equations and Quasiconformal Mappings
in the Plane},
Princeton Mathematical Series, Vol. 48,
Princeton University Press, Princeton, 2009.

\bibitem{BozinMateljevicPisa2020}
V. Bo\v zin and M. Mateljevi\'c,
\emph{Quasiconformal and HQC mappings between Lyapunov Jordan domains},
Ann. Sc. Norm. Super. Pisa Cl. Sci. (5) \textbf{21} (2020),
Special Issue, no. 1--2, 107--132.

\bibitem{ChenPonnusamy2017}
S. Chen and S. Ponnusamy,
\emph{John disks and \(K\)-quasiconformal harmonic mappings},
J. Geom. Anal. \textbf{27} (2017), no. 2, 1468--1488.

\bibitem{ChenWang2020}
S. Chen and X. Wang,
\emph{Bi-Lipschitz characteristic of quasiconformal self-mappings of the
unit disk satisfying the bi-harmonic equation},
Indiana Univ. Math. J. \textbf{70} (2021), no. 3, 1055--1086.

\bibitem{ClunieSheilSmall}
J. Clunie and T. Sheil-Small,
\emph{Harmonic univalent functions},
Ann. Acad. Sci. Fenn. Ser. A I Math. \textbf{9} (1984), 3--25.

\bibitem{Duren}
P. Duren,
\emph{Harmonic Mappings in the Plane},
Cambridge Tracts in Mathematics, Vol. 156,
Cambridge University Press, Cambridge, 2004.

\bibitem{KalajMathZ2008}
D. Kalaj,
\emph{Quasiconformal and harmonic mappings between Jordan domains},
Math. Z. \textbf{260} (2008), 237--252.

\bibitem{Kalaj2009}
D. Kalaj,
\emph{Lipschitz spaces and harmonic mappings},
Ann. Acad. Sci. Fenn. Math. \textbf{34} (2009), no. 2, 475--485.

\bibitem{KalajPisa2011}
D. Kalaj,
\emph{Harmonic mappings and distance function},
Ann. Sc. Norm. Super. Pisa Cl. Sci. (5) \textbf{10} (2011),
no. 3, 669--681.

\bibitem{Kalaj2015AdvMath}
D. Kalaj,
\emph{Muckenhoupt weights and Lindel\"of theorem for harmonic mappings},
Adv. Math. \textbf{280} (2015), 301--321.

\bibitem{Kalaj2019TAMS}
D. Kalaj,
\emph{On Riesz type inequalities for harmonic mappings on the unit disk},
Trans. Amer. Math. Soc. \textbf{372} (2019), no. 6, 4031--4051.

\bibitem{Kalaj2022RMI}
D. Kalaj,
\emph{Harmonic quasiconformal mappings between \(C^1\) smooth Jordan domains},
Rev. Mat. Iberoam. \textbf{38} (2022), no. 1, 95--111.

\bibitem{KalajMateljevic2012}
D. Kalaj and M. Mateljevi\'c,
\emph{\((K,K')\)-quasiconformal harmonic mappings},
Potential Anal. \textbf{36} (2012), no. 1, 117--135.

\bibitem{KalajPavlovic2005}
D. Kalaj and M. Pavlovi\'c,
\emph{Boundary correspondence under quasiconformal harmonic diffeomorphisms
of a half-plane},
Ann. Acad. Sci. Fenn. Math. \textbf{30} (2005), no. 1, 159--165.

\bibitem{KalajPavlovic2011TAMS}
D. Kalaj and M. Pavlovi\'c,
\emph{On quasiconformal self-mappings of the unit disk satisfying Poisson's
equation},
Trans. Amer. Math. Soc. \textbf{363} (2011), no. 8, 4043--4061.

\bibitem{LehtoVirtanen}
O. Lehto and K. I. Virtanen,
\emph{Quasiconformal Mappings in the Plane},
Grundlehren der mathematischen Wissenschaften, Vol. 126,
Springer, Berlin, 1973.

\bibitem{PartykaSakan2014}
D. Partyka and K.-i. Sakan,
\emph{Quasiconformal and Lipschitz harmonic mappings of the unit disk onto
bounded convex domains},
Ann. Acad. Sci. Fenn. Math. \textbf{39} (2014), no. 2, 811--830.

\bibitem{PartykaSakanZhu2018}
D. Partyka, K.-i. Sakan and J.-F. Zhu,
\emph{Quasiconformal harmonic mappings with the convex holomorphic part},
Ann. Acad. Sci. Fenn. Math. \textbf{43} (2018), no. 1, 401--418.

\bibitem{Pavlovic2002}
M. Pavlovi\'c,
\emph{Boundary correspondence under harmonic quasiconformal homeomorphisms
of the unit disk},
Ann. Acad. Sci. Fenn. Math. \textbf{27} (2002), no. 2, 365--372.

\end{thebibliography}
\end{document}